\documentclass[hidelinks]{amsart}
\usepackage{articletemplate_arxiv, alphabeta}
\usepackage{needspace}



\begin{document}
\title[Numbers with at most $2$ prime factors]{Numbers with at most $2$ prime factors in short arithmetic progressions}
\author{Mayank Pandey}

\begin{abstract}
    We show that if $\frac{L}{\varphi(q)\log X}\to\infty$ as $X\to\infty$, almost all $(a, x)\in (\ZZ/q\ZZ)^\times\times [X, 2X]$ 
    are such that there exists a product of at most two primes in $[x, x + L]$ congruent to $a~(\text{mod }{q})$. 
\end{abstract}

\maketitle
\section{Introduction and statement of results}
Based on probabilistic considerations (a standard analysis of the so-called ``coupon collector problem''), 
one might conjecture the following:

\begin{conjecture}\label{conj:main_prime}
    Take $X$ large and $L\ll X, q\ge 1$. Furthermore, writing
    \[
        A = \frac{L}{\varphi(q)\log X},
    \]
    suppose that $A\gg 1$. 
    Then, for all but a $o_{A\to\infty}(1)$ proportion of $(a, x)\in (\ZZ/q\ZZ)^\times\times [X, 2X]$, there
    exists a prime $p\equiv a(q)$ in $[x, x + L]$. 
\end{conjecture}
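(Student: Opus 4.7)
\emph{Proof proposal.} The natural attack is a Chebyshev-type second moment argument, using orthogonality of Dirichlet characters mod $q$ to exploit the averaging over $a$. Let $N(a, x) := \#\{p \in [x, x+L] : p \equiv a \pmod{q}\}$, regarded as a random variable on the uniform probability space $(\ZZ/q\ZZ)^\times \times [X, 2X]$, with mean $\mu \sim A$ and variance $\sigma^2$. Chebyshev bounds the proportion of $(a, x)$ with $N = 0$ by $\sigma^2/\mu^2$, so it suffices to prove
\[
\frac{1}{\varphi(q)X}\sum_{a\,(q)}\int_X^{2X} N(a, x)^2 \, dx \;=\; \bigl(1 + o_{A\to\infty}(1)\bigr)\mu^2.
\]
Expanding $N(a, x) = \varphi(q)^{-1}\sum_\chi \bar\chi(a) S_\chi(x, L)$ with $S_\chi(x, L) := \sum_{x < p \le x+L}\chi(p)$ and collapsing via orthogonality over $a$, the left-hand side equals $\varphi(q)^{-2} X^{-1}\sum_\chi \int_X^{2X}|S_\chi(x, L)|^2\,dx$. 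The principal character alone contributes the target $A^2$ provided a Selberg-type variance estimate for primes in short intervals is available at length $L$, so the problem reduces to showing that the sum over nonprincipal $\chi$ is of strictly smaller order.

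For each nonprincipal $\chi$ I would insert the explicit formula, yielding $S_\chi(x, L)\log X \approx -L\sum_\rho x^{\rho - 1}$ where $\rho = \beta + i\gamma$ runs over nontrivial zeros of $L(s, \chi)$. Squaring, integrating in $x \in [X, 2X]$, and exploiting cancellation between distinct zero pairs, the task reduces to bounding
\[
L^2 \sum_{\chi\,(q)}\sum_{\rho} X^{2\beta - 1}.
\]
The natural input is an averaged zero-density estimate of Montgomery--Huxley type, combined with the large sieve inequality applied to the family $\{L(s, \chi)\}_{\chi\,(q)}$, and one hopes this can be pushed below the $A^2$ target uniformly in $q$ and $L$.

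The central obstacle, and the reason the statement remains a conjecture rather than a theorem, is the possible presence of a Landau--Siegel zero $\beta_1$ for some real $\chi_1 \pmod{q}$: such a zero alone contributes $\sim L^2 X^{2\beta_1 - 1}$ to the variance, which swamps the target $L^2/\log^2 X$ as soon as $\beta_1$ is sufficiently close to $1$. Unconditional elimination of Siegel zeros is famously open, and even under GRH closing the argument uniformly in $q$ would demand pair-correlation information across distinct $L$-functions at heights $\sim X/L$ that is beyond current reach. Presumably the paper sidesteps this wall by proving only the almost-prime variant stated in the abstract, where sieve weights can substitute for the refractory $L$-function input.
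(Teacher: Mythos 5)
The statement you were asked about is Conjecture~\ref{conj:main_prime}, and the paper does not prove it: it is stated as an open conjecture motivated by coupon-collector heuristics, with only conditional partial results cited (Heath-Brown under RH plus pair-correlation hypotheses for $q=1$; Selberg and Tur\'an under RH/GRH with $A$ growing faster than $\log^2 X$) and with the unconditional state of the art far away (Jia's $X^{1/20}$ for $q=1$). So there is no proof in the paper to compare yours against, and your closing diagnosis --- that the statement remains out of reach and that the paper instead proves the almost-prime analogue (Theorem~\ref{thm:main}) by sieve methods --- is exactly right.

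Your sketch itself is the standard second-moment strategy and is fine as a heuristic, but be aware of where it already breaks before the Siegel-zero issue you highlight: the principal-character term alone is the $q=1$ short-interval variance problem, which is precisely Selberg's theorem and needs RH even to get started for $L$ as short as $\varphi(q)\log X\cdot A$ with $A\le X^c$; and after the explicit formula, squaring and discarding cross-terms to arrive at $L^2\sum_\chi\sum_\rho X^{2\beta-1}$ loses too much --- Montgomery--Huxley zero-density bounds cannot beat the target $A^2\asymp L^2/(\varphi(q)\log X)^2$ in this range, which is why even the conditional results of Selberg and Tur\'an require $A\gg\log^2 X$ rather than $A\to\infty$, and why Heath-Brown needs pair-correlation input beyond GRH. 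None of this is a defect you could have repaired: the point is simply that your proposal is (correctly) not a proof, and the paper does not claim one either; its actual content is the sieve-theoretic Theorem~\ref{thm:main}, proved via Richert's weighted sieve, a Type~I variance estimate (Proposition~\ref{prop:typeI_average}), well-factorability of linear sieve weights, and the Weil bound for incomplete Kloosterman sums.
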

In the case $q = 1$, this was shown by Heath-Brown~\cite{He} assuming the Riemann hypothesis and certain hypotheses on the
distribution of gaps between zeros of the zeta function. Selberg~\cite{Se} and Tur\'an~\cite{Tu} (resp.) have shown Conjecture~\ref{conj:main_prime} as long as
$A$ grows slightly more quickly than $\log^2 X$ assuming the Riemann hypothesis and
Generalized Riemann hypothesis in the cases $q = 1$ and $L\asymp X$ respectively.

Currently, unconditional results toward Conjecture~\ref{conj:main_prime} are quite poor compared to 
conjectures. In the case $q = 1$, the current best result is due to Jia~\cite{Ji} with intervals
of length $X\efr{1}{20}$.

A weakening of the conjecture that has been studied with more success is that in which one asks 
for numbers with at most $2$ prime factors instead of primes (which for the remainder of the paper
we refer to as almost primes).

In 1989, Mikawa~\cite{Mi} showed that if $A/\log^5X\to\infty$ and $L\asymp X$, then the 
conjecture holds for almost primes: almost all reduced residue classes modulo $q$ contain an 
almost prime $\asymp X$.

A more recent result of Matom\"aki~\cite{Ma} gives the conjecture for almost primes in short intervals by showing the 
following:
\begin{theorem}[{\cite[Theorem 1.1]{Ma}}]\label{thm:short_int}
    Let $c > 0$ be sufficiently small. Then, for $2\le h\le X^{\frac{1}{100}}$, we have that
    \[
        \sum_{x - h\log X < n\le x}\charf{\Omega(n)\le 2}\ge ch
    \]
    for all $x\in [X, 2X]$ outside of a set of measure $O(X/h)$.
\end{theorem}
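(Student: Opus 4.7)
The strategy is a short-interval linear sieve combined with a second-moment estimate over the starting point $x$. Fix $\alpha$ slightly above $1/3$, set $z = X^{\alpha}$, $P(z) = \prod_{p<z} p$, and $A_x = \ZZ \cap (x - h\log X, x]$; any $n \in [X/2, 2X]$ with $(n, P(z)) = 1$ automatically has $\Omega(n) \le 2$, so it suffices to produce a lower bound $\gg h$ for the sifted count $S(A_x, P(z))$ for all but an exceptional set of $x \in [X, 2X]$ of measure $O(X/h)$. Applying the Rosser--Iwaniec linear sieve at level $D = X^{\beta}$ with some $\beta \in (2\alpha, 1)$ gives
\[
    S(A_x, P(z)) \ge h \log X \cdot V(z)\, f(\beta/\alpha) + \sum_{\substack{d \le D \\ d \mid P(z)}} \lambda_d^-\, r_d(x),
\]
where $V(z) \asymp 1/\log X$, $f$ is the linear-sieve function (positive on $(2, \infty)$), and $r_d(x) = |A_x \cap d\ZZ| - h\log X/d$. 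Since the main term is $\gg h$, the problem reduces to bounding the error sum $E(x) := \sum_{d \le D} \lambda_d^-\, r_d(x)$ by $o(h)$ outside a set of measure $O(X/h)$.

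By Chebyshev's inequality, it suffices to prove $\int_X^{2X} |E(x)|^2 \, dx \ll X h$. After smoothing the window $(x - h\log X, x]$ and applying Perron--Plancherel, $E(x)$ is expressible as a contour integral against the Dirichlet polynomial $M(s) := \sum_{d \le D} \lambda_d^-\, d^{-s}$, and the target bound becomes a short-interval mean-value estimate for $|M(1+it)|^2$ weighted by the squared Fourier transform of the window over $|t| \ll X/(h\log X)$. Via a Buchstab-style combinatorial decomposition, $M$ splits into Type I (smooth, short) and Type II (balanced bilinear) Dirichlet polynomials, each of which one controls through a short-interval mean-value theorem in the spirit of Matom\"aki--Radziwi\l\l.

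The main obstacle is the Type II contribution at sieve level close to $D$: since $D$ greatly exceeds the window length $h \log X \le X^{1/100}\log X$, individual discrepancies $r_d(x)$ are of constant size for $d$ near $D$, so $E(x)$ is pointwise far larger than the target $O(h)$, and all cancellation must come from the sign oscillation of the $\lambda_d^-$ combined with the averaging over $x$. The heart of the proof is therefore a short-interval mean-value theorem for sieve-weighted Dirichlet polynomials of length up to $X^{1 - \varepsilon}$, precisely the type of estimate provided for multiplicative functions by the Matom\"aki--Radziwi\l\l machinery, and which must be adapted to linear-sieve weights in the present setting.
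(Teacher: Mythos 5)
This theorem is not proved in the paper at all --- it is quoted from Matom\"aki's work, and the present paper's contribution is a generalization proved by a related but distinct method (Richert's weighted sieve at level $X^{17/31-\varepsilon}$, well-factorable weights, and Weil/Kloosterman estimates in place of $\mathrm{GL}(2)$ spectral theory). Measured against either of those proofs, your proposal has a genuine gap, and it is located exactly where the real difficulty of the theorem lies.

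Your plan is a \emph{plain} lower-bound linear sieve with $z=X^{\alpha}$, $\alpha$ slightly above $1/3$, which forces the sieve level to satisfy $\beta/\alpha>2$, i.e.\ $D=X^{\beta}$ with $\beta>2/3$. You then need, in mean square over $x\in[X,2X]$, square-root--type cancellation in the remainder sum $\sum_{d\le D}\lambda_d^- r_d(x)$ for windows of length $h\log X\le X^{1/100}\log X$. No such estimate is known at level $X^{2/3+\varepsilon}$; this is precisely why Matom\"aki (and this paper, and Mikawa before them) abandon the plain linear sieve and instead use Richert's weighted sieve, whose extra weights $1-\frac{1}{\eta}\sum_{z\le p<y, p\mid n}(1-\frac{\log p}{\log y})$ allow one to detect $P_2$'s with sifting parameter only $z=D^{1/4}$ and level $D=X^{\kappa-100\varepsilon}$, $\kappa=17/31\approx0.548$ --- the largest exponent their dispersion/Kloosterman (or spectral) technology can reach in this averaged short-interval setting. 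Your parameter choice therefore presupposes an exponent of distribution well beyond what the known Type I/II machinery delivers, and nothing in the proposal supplies it. The closing appeal to ``Matom\"aki--Radziwi{\l}{\l} machinery adapted to linear-sieve weights'' does not fill the hole: those mean value theorems concern Dirichlet polynomials attached to bounded multiplicative functions and exploit their multiplicative (Ramar\'e/factorization) structure; they do not give square-root cancellation for sieve-weighted Dirichlet polynomials of length $X^{1-\varepsilon}$, and if such a bound were available it would amount to level of distribution near $1$ and imply results far stronger than the theorem itself. A correct proof along the lines of the literature instead keeps the level near $X^{0.55}$, accepts that sifted numbers may a priori have more than two prime factors, and uses the weighted sieve to compensate, with the second-moment error bound handled by a dispersion argument (the analogue of (\ref{eq:remainder_averaged}) and Proposition~\ref{prop:typeI_average} here) rather than by a Plancherel reduction to $|M(1+it)|^2$.
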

Our main result is the following generalization of Matom\"aki and Mikawa's results, and at 
the same time an improvement over Mikawa's result when specialized to the case of $L\asymp X$.
\begin{theorem}\label{thm:main}
    Let $c > 0$ be a sufficiently small constant and $L\ge 2, q\ge 1, X$ large.
    Writing
    \[
        A = \frac{L}{\varphi(q)\log X},
    \]
    suppose we have that 
    \[
        1\ll A\le X^c.
    \]
    For $(a, x)\in(\ZZ/q\ZZ)^\times\times [X, 2X]$, let
    \[
        \mc E = \bigg\{(a, x)\in (\ZZ/q\ZZ)^\times\times[X, 2X] : \bigg|\ssum{x - L < n\le x\\ n\equiv a(q)\\ p | n\implies p > X^{\frac{1}{8}}}\charf{\Omega(n)\le 2}\bigg|\le cA \bigg\}.
    \]
    Then, the measure of $\mc E$ is $\ll\frac{\varphi(q)X}{A}$.
\end{theorem}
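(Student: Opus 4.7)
My plan is to argue by a second moment method. Let $g(n):=\charf{\Omega(n)\le 2,\,p\mid n\implies p>X^{1/8}}$, let $S(a,x)$ denote the inner sum in the definition of $\mc E$, and set $M(x):=\varphi(q)^{-1}\sum_{x-L<n\le x,\,(n,q)=1}g(n)$. By Theorem~\ref{thm:short_int} applied to suitable subintervals (combined with standard sieve estimates when $L$ is not too short), one has $M(x)\gg A$ for all $x\in[X,2X]$ outside an exceptional set $\mc E_0$ of measure $\ll X/A$. Outside $(\ZZ/q\ZZ)^\times\times\mc E_0$, membership in $\mc E$ forces $|S(a,x)-M(x)|\gg A$, so by Chebyshev's inequality it suffices to prove the variance bound
\[
V := \int_X^{2X}\ssum{a\in(\ZZ/q\ZZ)^\times}|S(a,x)-M(x)|^2\,dx \ll \varphi(q)XA.
\]

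Orthogonality of Dirichlet characters modulo $q$ yields $V = \varphi(q)^{-1}\sum_{\chi\ne\chi_0}\int_X^{2X}|T(\chi,x)|^2\,dx$, where $T(\chi,x) := \ssum{x-L<n\le x}g(n)\chi(n)$. A standard Parseval/Saffari--Vaughan-type argument then reduces each inner integral to a mean value of the Dirichlet polynomial $F(s,\chi) := \sum_{n\asymp X}g(n)\chi(n)n^{-s}$ on a short vertical segment. To exploit the multiplicative structure of $g$, I would decompose $F$: the semiprime contribution $n=p_1p_2$ with $X^{1/8}<p_1\le p_2$ splits dyadically as products $P(s,\chi)Q(s,\chi)$ of prime-supported polynomials of lengths $N_1,N_2\in[X^{1/8},X^{7/8}]$ with $N_1N_2\asymp X$, while the prime contribution is treated via the Heath-Brown identity, whose Type-II pieces admit the same two-factor decomposition and whose Type-I pieces reduce to character-sum estimates via the large sieve and P\'olya--Vinogradov.

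\textbf{Main obstacle.} The technical heart of the proof is a hybrid Matom\"aki--Radziwi\l\l-type mean value estimate for such products $PQ$, averaged simultaneously over Dirichlet characters $\chi$ modulo $q$ and $t$ in a short window of length $\asymp X/L$. The strategy combines the multiplicative large sieve over $(\chi,t)$ with a Hal\'asz--Montgomery-type pointwise bound on the set of $(\chi,t)$ where $|P(1+it,\chi)|$ is large, exploiting the prime support of $P$ and the length lower bound $\ge X^{1/8}$ to replace $|P|^{2k}$ by a polynomial of shorter effective length via high moments. Once this hybrid mean value estimate is in hand, assembling it with the decomposition of $F$ and the Type-I contributions yields $V\ll\varphi(q)XA$, and Chebyshev's inequality completes the proof.
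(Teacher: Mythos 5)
Your plan has a genuine gap, and it sits exactly where you place the ``main obstacle'': the hybrid variance bound $V\ll\varphi(q)XA$ is not just technically unproven, it is out of reach and, in part of the theorem's range, cannot be proven unconditionally at all. First, for large $q$ (the theorem allows $q$ up to roughly $X^{1-c}/\log X$, the regime of Mikawa's result), a putative Landau--Siegel zero for a real character $\chi_1$ modulo $q$ would make your bound false: in that scenario the rough primes concentrate on classes with $\chi_1(a)=-1$ and the rough products of two primes on classes with $\chi_1(a)=+1$, and since these two subsequences have different densities (roughly $1$ versus $\log 7$ in units of $A$), one gets $|S(a,x)-M(x)|\gg A$ for a positive proportion of $a$, i.e.\ $V\gg\varphi(q)XA^{2}$. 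So no unconditional argument can deliver the Chebyshev input you need. Relatedly, your toolkit degrades badly in the $q$-aspect: after the $PQ$ factorization you need pointwise savings on $\sum_{p\sim N_1}\chi(p)p^{-it}$ with $N_1$ as small as $X^{1/8}$, which may be far below $q$; P\'olya--Vinogradov and Burgess give nothing there, and no cancellation in character sums over primes of length below the modulus is known. Second, even for $q=1$ the bound $V\ll XA$ is Poisson-scale (square-root cancellation per interval) for a count that contains the primes themselves; there is no mechanism forcing semiprime fluctuations to cancel prime fluctuations, so this is essentially pair-correlation-strength information, known only conditionally (Heath-Brown's result cited in the introduction). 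The Matom\"aki--Radziwi\l{}\l{}-type technology you invoke gives, for $E_2$-type counts, savings that lose powers of $\log$ (cf.\ Ter\"av\"ainen's work), and cannot reach the optimal threshold $A\to\infty$; this is precisely why both Matom\"aki's paper and the present one do \emph{not} prove concentration of the indicator around its mean.

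The paper takes a structurally different route that sidesteps all of this: it never estimates $S(a,x)$ asymptotically. It runs Richert's weighted sieve to produce a lower bound $S(a,x)\ge\frac{L}{q}M(z,y)+E(x,a,y,z)$ in which the arithmetic input is purely of Type I (counts $A_d(x,a)$ of multiples of $d$ in the short progression), shows the sieve main term $M(z,y)\gg\frac{q}{\varphi(q)\log X}$ by a linear-sieve computation, and then bounds $\sum_a\int_X^{2X}|E|^2\,dx\ll\frac{LX}{\log X}$ via Proposition~\ref{prop:typeI_average}: Poisson summation plus the Weil bound for incomplete Kloosterman sums, with the well-factorability of the linear sieve weights making the bilinear structure usable. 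This keeps everything on the ``positivity'' side of the parity barrier (primes or $E_2$'s together), is immune to exceptional zeros, and obtains the $q$-aspect saving from algebraic exponential-sum bounds for a single large modulus rather than from $L$-function information. If you want to salvage a second-moment framing, it must be applied to the sieve remainder $E(x,a,y,z)$, as in the paper, not to $S(a,x)-M(x)$ itself.
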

We state a corollary of the main theorem that may be of interest.
Let $\psi:\ZZ_{> 0}\to\RR_{> 0}$ be such that $\psi(q)\to\infty$ as $q\to\infty$.
Then, almost all residue classes modulo $q$ contain some $q^{\frac 18}$-rough\footnote{By a $z$-rough number, we mean one with no prime factors $\le z$} $n\le \psi(q)\varphi(q)\log q$ 
with $\Omega(n)\le 2$.

We remark that by the prime number theorem, $A$ is, up to a constant, the average 
number of $n\equiv a(q)$ in $[x - L, x]$ with $\Omega(n)\le 2, p |n\implies p > X^{\frac 18}$
as $(a, x)$ ranges over $(\ZZ/q\ZZ)^\times\times [X, 2X]$. In particular, our result can clearly not 
be improved when $A\ll 1$ (in which case the trivial bound of $\varphi(q)X$ matches the main 
theorem). On the other hand, our result is nontrivial if $A\to\infty$ as $X\to\infty$. In this 
sense, our result is optimal, as was Matom\"aki's~\cite{Ma} (after restricting to $X^{\frac 18}$-rough
numbers).


Our methods are a combination of the those of Matom\"aki and Mikawa, utilizing the more efficient 
treatment of sieve weights of Matom\"aki, and using more extensively the well-factorability of 
sieve weights to use the Weil bound as Mikawa does in place of the 
$\GL(2)$ spectral theory done by Matom\"aki. Such a path to Matom\"aki's result is referred to in the remarks
of~\cite{Ma}, coming from a comment of Granville.

%
%

We remark that it should be possible using the ingredients of this paper to improve a result of Mikawa~\cite{Mi2}, and show a bound of 
\[
    \sum_{\substack{p\le X\\ p\equiv a(q)}} 1\le (18 + o(1))\frac{X}{\varphi(q)\log \frac{X^{6}}{q}}.
\]
for almost all $a\in(\ZZ/q\ZZ)^\times$ for $X^{\frac 67}\le \varphi(q)\ll o(X/\log X)$. We thank K. Matom\"aki for pointing this out, and we leave the details to the interested reader.

\subsection{Notation and conventions}
Throughout, we take $X$ some large parameter, and let $L, q$ be as in the statement of the
main theorem.

We use Vinogradov notation $\ll, \gg, \asymp, O(-)$ as usual, with any dependencies of the implied 
constants indicated in subscripts. When we 
refer to $o(Y)$, we mean a quantity that is bounded in magnitude by $C(X)Y$ for some 
$C(X)$ going to zero as $X\to\infty$. 

For $\mc A\subset\NN$, we write 
\[
    S(\mc A, z) = \sum_{n\in\mc A}\charf{p | n\implies p > z}.
\]
For $(a, x)\in (\ZZ/q\ZZ)^\times\times [X, 2X]$, we let
\[
    \mc A(x, a) = \set{x < n\le x + L : n\equiv a(q)}, 
    \mc A_d(x, a) = \set{n\in\mc A(x, a) : d | n}, 
\]
and write $A_d(x, a) = \#\mc A_d(x, a)$.

For some $a\mod d$ with $(a, d) = 1$, we let $\conj a$ be so that $\conj aa\equiv 1(d)$. Which 
modulus $d$ is to be taken will be clear from context or otherwise specified.

\section{Standard lemmas}
We use at a few points the following basic estimates. These are standard, though we know not a reference for some of them, so we provide quick proofs.
\begin{lemma}\label{lem:gcd_sum}
    For all $q, X\ge 1$, we have
    \[
        \sum_{n\le X} (q, n)\le d(q)X.
    \]
\end{lemma}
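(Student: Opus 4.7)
The plan is to group the terms on the left-hand side according to the value of $(q,n)$. Any such value must be a divisor $d$ of $q$, so
\[
    \sum_{n\le X}(q,n) = \sum_{d\mid q} d\cdot \#\set{n\le X : (q,n) = d}.
\]
The set counted on the right is contained in $\set{n\le X : d\mid n}$, which has size at most $X/d$. Substituting this bound gives
\[
    \sum_{n\le X}(q,n)\le \sum_{d\mid q} d\cdot\frac{X}{d} = d(q)X,
\]
as desired.

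The only potential subtlety is being precise about the floor in $\lfloor X/d\rfloor$ versus $X/d$, but this causes no trouble since we only need an upper bound. There is essentially no obstacle here; an alternative (equally short) route would be to use the identity $(q,n) = \sum_{d\mid q,\ d\mid n}\varphi(d)$, swap the order of summation, and bound $\varphi(d)\lfloor X/d\rfloor \le X$, again summing over $d\mid q$ to get $d(q)X$.
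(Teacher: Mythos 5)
Your main argument is correct, and it takes a slightly different route from the paper's: you partition the $n\le X$ according to the value of $(q,n)$ and bound $\#\set{n\le X : (q,n)=d}\le \lfloor X/d\rfloor\le X/d$, whereas the paper instead uses the identity $(q,n)=\sum_{d\mid(q,n)}\varphi(d)$, swaps the order of summation, and bounds $\varphi(d)\lfloor X/d\rfloor\le X$ for each $d\mid q$ — which is precisely the alternative you sketch at the end. Both proofs are one-liners of the same difficulty and give the same bound; the $\varphi$-identity route is the more standard template (and reuses the same M\"obius/totient machinery that appears in the next lemma in the paper), while your grouping-by-gcd argument is marginally more self-contained since it requires no identity, only the trivial observation that $(q,n)\mid q$. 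Either is fine here.
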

\begin{proof}
    We have that 
    \[
        \sum_{n\le X} (q, n) = \sum_{n\le X}\ssum{d | (n, q)}\varphi(d) = \sum_{d | q} \varphi(d) \ssum{n\le X\\ d | n } 1\le d(q) X,
    \]
    as desired.
\end{proof}
\begin{lemma}\label{lem:coprime_sum}
    For $X, q\ge 1$, we have
    \[
        \ssum{n\le X\\(n, q) = 1} 1 = \frac{\varphi(q)}{q}X + O(d(q)).
    \]
\end{lemma}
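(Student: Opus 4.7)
The plan is to use Möbius inversion to detect the coprimality condition $(n,q)=1$, which will reduce the count to a sum of floor functions indexed by divisors of $q$.

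More precisely, I would first write
\[
\sum_{\substack{n\le X\\(n,q)=1}} 1 = \sum_{n\le X}\sum_{d\mid (n,q)}\mu(d) = \sum_{d\mid q}\mu(d)\left\lfloor\frac{X}{d}\right\rfloor,
\]
after swapping the order of summation and noting that the inner count of $n\le X$ with $d\mid n$ is exactly $\lfloor X/d\rfloor$. Then I would replace each floor by $X/d + O(1)$, so that the main term becomes $X\sum_{d\mid q}\mu(d)/d = \varphi(q)/q\cdot X$ by the standard identity for Euler's totient, and the error term is
\[
O\!\left(\sum_{d\mid q}|\mu(d)|\right) = O(2^{\omega(q)}) = O(d(q)),
\]
using $2^{\omega(q)}\le d(q)$.

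There is no real obstacle here; the only very minor point is being careful that $\sum_{d\mid q}|\mu(d)| = 2^{\omega(q)}$ is bounded by $d(q)$, which follows because every squarefree divisor of $q$ is in particular a divisor of $q$. This completes the argument in a few lines.
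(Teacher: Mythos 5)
Your proof is correct and is essentially the same as the paper's: both detect the coprimality condition via Möbius inversion over divisors of $q$, replace the resulting inner count by $X/d + O(1)$, and invoke the identity $\varphi(q)/q = \sum_{d\mid q}\mu(d)/d$. Your additional remark that the error is $\sum_{d\mid q}|\mu(d)| = 2^{\omega(q)} \le d(q)$ makes explicit the bound that the paper leaves implicit.
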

\begin{proof}
    By M\"obius inversion, we have that 
    \[
        \ssum{n\le X\\ (n, q) = 1} 1 = \sum_{d | q}\mu(d)\ssum{n\le X\\ d|n} 1 = X\sum_{d|q}\frac{\mu(d)}{d} + O(d(q)).
    \]
    The desired result follows upon noting that 
    \[
        \frac{\varphi(q)}{q} = \sum_{d | q}\frac{\mu(d)}{d}.
    \]
\end{proof}

\subsection{Sums of incomplete Kloosterman sums}

We shall use the following standard estimate for incomplete Kloosterman sums.

\begin{lemma}\label{lem:kloost_est}
    We have that for all $d\ge 1, \eps > 0$
    \[
        \sum_{\substack{n\le X\\ (n, qd) = 1}} e\pfrc{a\conj n}{q}\ll_\eps 
        (qd)^\eps(a, q)^{\frac{1}{2}}q^{\frac{1}{2} + \eps}\bigg(1 + \frac{X}{q}\bigg).
    \]
\end{lemma}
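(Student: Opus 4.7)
My plan is to reduce to Kloosterman sums by M\"obius-removing the condition $(n, d) = 1$, complete the resulting incomplete sum modulo $q$ via Fourier expansion, and then apply the Weil bound.

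First, factor $(n, qd) = 1$ as $(n, q) = 1$ and $(n, d) = 1$, and apply M\"obius inversion to the second condition:
\[
    \sum_{\substack{n\le X\\ (n, qd) = 1}} e\pfrc{a\conj n}{q} = \sum_{e\mid d}\mu(e)\ssum{n\le X,\ e\mid n\\ (n, q) = 1} e\pfrc{a\conj n}{q}.
\]
Divisors $e$ with $(e, q) > 1$ drop out, since $e\mid n$ would contradict $(n, q) = 1$. For $(e, q) = 1$, write $n = em$ so that $\conj n \equiv \conj e\,\conj m\pmod q$; the problem reduces to estimating, uniformly in $a'$ with $(a', q) = (a, q)$ and in $Y\le X$, the sum
\[
    \mc T(a', Y) := \ssum{m\le Y\\ (m, q) = 1} e\pfrc{a'\conj m}{q},
\]
with an extra overall factor of $d(d) \ll d^\eps$ coming from the M\"obius sum.

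For $\mc T(a', Y)$, I would expand the counting function $\#\{m\le Y : m\equiv b\pmod q\}$ as a Fourier series in $b$ modulo $q$, which produces
\[
    \mc T(a', Y) = \frac{1}{q}\sum_{h = 0}^{q-1}\bigg(\sum_{m\le Y} e\pfrc{hm}{q}\bigg) K(-h, a'; q),
\]
where $K(m, n; q) = \sum_{(b, q) = 1}e((mb + n\conj b)/q)$ is a Kloosterman sum mod $q$. The $h = 0$ term is a Ramanujan sum bounded by $(a, q)$, contributing $\ll Y(a, q)/q$; since $(a, q)\le q$, this fits into the target bound. For $h\not\equiv 0\pmod q$, the Weil bound gives $|K(-h, a'; q)|\ll d(q)(a, q)^{1/2}q^{1/2}$, while the standard geometric estimate $|\sum_{m\le Y}e(hm/q)|\ll \min(Y, q/|h|)$ together with $\sum_{h\not\equiv 0}\min(Y, q/|h|)\ll q\log q$ yields an $h\ne 0$ contribution of $\ll d(q)(a, q)^{1/2}q^{1/2}\log q$. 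Hence $|\mc T(a', Y)|\ll q^\eps(a, q)^{1/2}q^{1/2}(1 + Y/q)$, and summing over $e\mid d$ gives the claimed bound.

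The main non-trivial input is the Weil bound for Kloosterman sums modulo a general, possibly composite, $q$, which I would take as a black box (multiplicativity plus the classical prime-power bound). The only subtle accounting is that the naive Ramanujan-sum estimate gives $(a, q)$ rather than $(a, q)^{1/2}$, but since $(a, q)\le q$ one trades $(a, q)^{1/2}$ against $q^{1/2}$ to exactly recover the shape $(a, q)^{1/2}q^{1/2}(1 + X/q)$ stated in the lemma; the remaining logarithmic and $d(q), d(d)$ factors are absorbed into $(qd)^\eps$.
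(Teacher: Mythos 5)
Your proposal is correct, and it is the standard completion-plus-Weil argument: M\"obius removal of the condition $(n,d)=1$ (with the divisors $e\mid d$, $(e,q)>1$ vanishing), reduction to $\sum_{m\le Y,(m,q)=1}e(a'\conj m/q)$ with $(a',q)=(a,q)$, Fourier completion modulo $q$, the bound $|c_q(a')|\le(a,q)$ for the $h=0$ term (absorbed since $(a,q)\le q$), and the Weil bound $|K(-h,a';q)|\le d(q)(a,q)^{1/2}q^{1/2}$ against $\sum_h\min(Y,q/|h|)\ll q\log q$ for $h\ne 0$. The paper does not prove the lemma but simply cites it as standard (Mikawa, Lemma 3); your argument is precisely the proof behind that citation, so there is nothing to add.
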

\begin{proof}
    This is standard; see for example~\cite[Lemma 3]{Mi}.
\end{proof}

\section{Setup}
\subsection{The weighted sieve}
We shall be following the setup of~\cite[\S2]{Ma} in using Richert's weighted sieve. 
So that we may directly make use 
of estimates such as~\cite[(49)]{Ma}, we shall use the same sieve weights as in~\cite{Ma}, with only slight modifications. 
Take $\eps > 0$ small, and write
$\kappa = \frac{17}{31}$. Then, take
\begin{align*}
    D = X^{\kappa - 100\eps}, z = D^{\frac{1}{4}}, y = X^{\frac{1}{2} - 10\eps}.
\end{align*}
Define the coefficients
\[
    w_n = 1 - \frac{1}{\eta}\ssum{p | n\\ z\le p < y}\bigg(1 - \frac{\log p}{\log y}\bigg).
\]
where $\eta = \frac{1 - 60\eps}{1 - 20\eps}$.
The key property we require of $w_n$ is the following:
\begin{lemma}\label{lem:weighted_sieve_upper}
    For all but a measure $\ll\frac{LX}{z}$ subset of $(a, x)\in(\ZZ/q\ZZ)^\times\times [X, 2X]$, we have that
    \[
        \ssum{x - L < n\le x\\ n\equiv a(q)\\p | n\implies p > z}\charf{\Omega(n)\le 2}\ge\frac{2 - 80\eps - 800\eps^2}{1 - 40\eps + 400\eps^2}\ssum{n\in\mc A(x, a)\\ p | n\implies p > z} w_n.
    \]
\end{lemma}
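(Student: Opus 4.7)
I would follow the Richert weighted sieve argument of Matom\"aki~\cite[\S2]{Ma}, adapted to short intervals in arithmetic progressions. The bound should reduce to a pointwise sieve inequality for squarefree $n$ together with (i) negligibility of non-squarefree contributions and (ii) a comparison between the main terms of the two sums.

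The key pointwise fact is that for squarefree $n \le 2X$ with $p \mid n \Rightarrow p > z$ and $\Omega(n) \ge 3$, one has $w_n \le 0$. To see this, let $j$ and $k$ denote the numbers of distinct prime factors of $n$ lying in $[z, y)$ and $[y, \infty)$ respectively, and let $M$ be the product of the former. Since $n \le 2X < y^3$ (for $\eps$ small), $k \in \{0, 1, 2\}$; in each case, $M \le n/y^k$ and $j = \Omega(n) - k \ge 3 - k$ combine to give
\[
j - \frac{\log M}{\log y} \ge \Omega(n) - \frac{\log(2X)}{\log y} \ge 3 - \frac{2}{1 - 20\eps} + o(1) = \eta + o(1),
\]
where $\eta = (1 - 60\eps)/(1 - 20\eps)$ is picked precisely so that all three cases collapse to the same threshold. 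This forces $w_n \le 0$. Non-squarefree $z$-rough $n \le 2X$ must have $p^2 \mid n$ for some $p > z$, giving at most $O(X/z)$ such $n$, which contributes $O(LX/z)$ to the exceptional measure when summed over $(a, x) \in (\ZZ/q\ZZ)^\times \times [X, 2X]$ and so can be discarded.

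Granted the pointwise bound, the constant $C = (2 - 80\eps - 800\eps^2)/(1 - 40\eps + 400\eps^2)$ should emerge from an explicit comparison of main terms. The LHS main term is the count of primes plus semiprimes in the short-interval residue class, given by PNT (and its semiprime analogue via Buchstab's identity) as $A$ times an integral $\int_{\kappa/4}^{1/2} ds/(s(1-s))$ plus the prime term. The RHS main term is $\sum_{\text{rough}} w_n$, which by expanding $w_n$ becomes the same rough-number count minus a $1/\eta$-weighted average of $(1 - \log p/\log y)$ over prime divisors in $[z, y)$; the specific value of $C$ is precisely that ratio, and the choice of $\eta$ is arranged so that the ratio evaluates to the stated expression. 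Concentration of both sides around their averages, via Chebyshev combined with mean-square estimates for short sums in APs using Lemma~\ref{lem:kloost_est}, then gives the inequality for all but a set of measure $\ll LX/z$.

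\textbf{The main obstacle}, as I see it, is verifying the precise constant $C$ on average: this requires careful bookkeeping of the contributions from primes, of semiprimes $n = p_1p_2$ split according to whether both, one, or no $p_i$ exceed $y$, and of the $\eta$-weighted sums of $(1 - \log p/\log y)$. A secondary obstacle is the variance estimate needed for Chebyshev concentration: one must control second moments of the relevant sums over $(a, x)$, exploiting Kloosterman cancellation as in Lemma~\ref{lem:kloost_est} to keep the exceptional measure within the $LX/z$ budget.
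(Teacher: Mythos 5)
The first half of your plan coincides with the paper's proof: you discard the $(a,x)$ for which some $n\in\mathcal A(x,a)$ is divisible by $p^2$ with $p>z$ (there are $O(X/z)$ such $n\le 3X$, each responsible for a set of $(a,x)$ of measure $O(L)$, hence measure $\ll LX/z$ in total), so that $\Omega(n)\le 2$ may be replaced by $\omega(n)\le 2$; and your pointwise observation that $w_n\le 0$ for squarefree $z$-rough $n$ with at least three prime factors is exactly the paper's key step, proved the same way (extend the sum defining $w_n$ to all $p\mid n$ at the cost of nonpositive terms, bound $\sum_{p\mid n}\log p\le\log n\le\log X+O(1)$, use $\log X/\log y=2/(1-20\eps)$, and note that $\eta$ is chosen to make the resulting expression nonpositive when $\omega(n)\ge 3$).

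Where your proposal goes wrong is the second half. No comparison of main terms, no prime/semiprime counts, no Chebyshev concentration and no Kloosterman input are needed: the constant is also purely pointwise. For every $z$-rough $n$ one has $w_n\le -\omega(n)/\eta+1+\frac{2}{\eta(1-20\eps)}$, and this bound is $\le 0$ for $\omega(n)\ge 3$ while for $\omega(n)\le 2$ it is bounded by an absolute constant (this is where the explicit rational function of $\eps$ in the statement comes from; indeed even the trivial bound $w_n\le 1$ would do). Summing over $n\in\mathcal A(x,a)$ then gives the asserted inequality for \emph{every} non-discarded pair $(a,x)$, deterministically. By contrast, the route you describe would require asymptotics, or at least concentration, for primes and semiprimes in almost all intervals of length $L\asymp A\varphi(q)\log X$ in progressions modulo $q$ — precisely the hard statement that the rest of the paper (the sieve decomposition and the Type I/dispersion estimates of Sections 4--5) is designed to avoid proving directly; it is not available at this stage, cannot be extracted from Lemma~\ref{lem:kloost_est} plus Chebyshev, and would not keep the exceptional measure within the $O(LX/z)$ budget. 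So the ``main obstacle'' you identify does not exist, and as written your argument has a genuine gap: the step producing the constant by averaging would fail, whereas the intended proof closes immediately once you apply the pointwise upper bound to the terms with $\omega(n)\le 2$.
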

\begin{proof}
    Outside of a measure $\frac{LX}{z}$ subset of $(a, x)\in (\ZZ/q\ZZ)^\times\times [X, 2X]$, there does not exist $n\in\mc A(x, a)$ such that $p^2 | n$ for some $p > z$. For such $(a, x)$, we have that
    \[
        \ssum{n\in\mc A(x, a)\\p | n\implies p > z}\charf{\Omega(n)\le 2} = \ssum{n\in\mc A(x, a)\\p | n\implies p > z}\charf{\omega(n)\le 2}.
    \]
    Thus, it suffices to show that
    \[
        \ssum{n\in\mc A(x, a)\\p | n\implies p > z}\charf{\omega(n)\le 2}\ge \bigg(\frac 12 - \delta\bigg)\ssum{n\in\mc A(x, a)\\ p | n\implies p > z} w_n.
    \]
    To see this, note that 
    \begin{align*}
        \ssum{n\in\mc A(x, a)\\ p | n\implies p > z} w_n&\le\ssum{n\in\mc A(x, a)\\ p | n\implies p > z}\bigg(-\frac{1}{\eta}\omega(n) + 1 + \frac{1}{\eta}\frac{1}{\log y}\sum_{p | n}\log p\bigg)\\
        &\le\ssum{n\in\mc A(x, a)\\ p | n\implies p > z}\bigg(-\frac{1}{\eta}\omega(n) + 1 + \frac{1}{\eta}\frac{\log X}{\log y}\bigg)\\
        &\le\ssum{n\in\mc A(x, a)\\ p | n\implies p > z}\bigg(-\frac{1}{\eta}\omega(n) + 1 + \frac{1}{\eta}\frac{2}{1 - 20\eps}\bigg)\\
        &\le\frac{2 - 80\eps - 800\eps^2}{1 - 40\eps + 400\eps^2}\ssum{n\in\mc A(x, a)\\ p | n\implies p > z}\charf{\omega(n)\le 2}.
    \end{align*}
    The desired result follows.
\end{proof}
Fix for now some choice of $(a, x)\in (\ZZ/q\ZZ)^\times\times [X, 2X]$.  By the definition of $w_n$, we have
\begin{equation}\label{eq:wn_expand}
    \ssum{n\in\mc A(x, a)\\p|n\implies p > z} w_n = S(\mc A(x, a), z) - \frac{1}\eta\sum_{w\le p < y} \frac{\log p}{\log y}S(\mc A(x, a)_p, z).
\end{equation}
Take $\beta = 30$, and $w = X^{\eps^2}, E = X^{\eps^3}$. Then, take 
\begin{align*}
    \mc D^{\pm} &= \set{p_1\dots p_r : z\ge p_1 > \cdots > p_r\ge w, p_1\dots p_m p_m^2 < D\forall (-1)^{m + 1} = \pm 1},\\
    \mc E^{\pm} &= \set{p_1\dots p_r : w\ge p_1 > \cdots > p_r, p_1\dots p_m p_m^\beta < E\forall (-1)^{m + 1} = \pm 1},
\end{align*}
and define upper and lower bound linear and $\beta$-sieve weights $\lambda_d^{\pm} = \mu(d)\charf{d\in\mc D^\pm}$ and $\rho_e^{\pm} = \mu(e)\charf{e\in\mc E^\pm}$ respectively. 
Define $\alpha_k^-$ as in~\cite[(15)]{Ma} so that
\[
    \charf{p | n\implies p > z}\ge\sum_{d | n}\alpha_d^-.
\]
Specifically, take $\alpha_d^- = \lambda_d^+\rho_d^- + \lambda_d^-\rho_d^+ - \lambda_d^+\rho_d^+$.

We now construct upper bound sieve weights for $S(\mc A(x, a)_p, z)$. For any scale $P$, let
\begin{align*}
    \mc D^{\pm}_P &= \set{p_1\dots p_r : z\ge p_1 > \cdots > p_r\ge w, p_1\dots p_m p_m^2 < D/P\forall (-1)^{m + 1} = \pm 1},\\
\end{align*}
Now, for $w\le P < z$, define upper and lower bound sieve weights $\lambda_{d,P}^{\pm} = \mu(d)\charf{d\in\mc D^\pm}$, and
let $\alpha_d^+ = \lambda_{d, P}^+\rho_d^+$, so that 
\[
    \charf{p | n\implies p > z}\le\sum_{d | n}\alpha_{d, P}^+.
\]
Putting this all together with (\ref{eq:wn_expand}), we obtain that  
\begin{align*}
    \ssum{n\in\mc A(x, a)\\ p|n\implies p > z} w_n&\ge \sum_{(d, q) = 1} \alpha_d^-A_d(x, a) - \sum_{z\le P < y}\ssum{p\sim P\\ p\nmid q}\sum_{(d, q) = 1}\alpha_{d, P}^+ A_{dp}(x, a)\\
    &= \sum_{(d, q) = 1} (\alpha_{d}^- - \beta_d) A_d(x, a),
\end{align*}
where 
\[
    \beta_{d} = \sum_{z\le P < y}\ssum{ep = d\\ p\sim P} \alpha_{d, P}^+.
\]
Here $P$ runs over powers of $2$, and we have made use of the fact that $\mc A(x, a)$ only contains integers relatively prime to $q$.
Rearranging, it follows that
\[
    \ssum{n\in\mc A(x, a)\\ p|n\implies p > z} w_n\ge \frac{L}{q}M(z, y) + E(x, a, y, z),
\]
where 
\begin{align*}
    M(z, y) &= \sum_{(d, q) = 1}\frac{\alpha_d^- - \beta_{d}}{d},
    E(x, a, y, z) &= \sum_{(d, q) = 1}(\alpha_d^- - \beta_d)\bigg(A_d(x, a) - \frac{L}{qd}\bigg).
\end{align*}
The main theorem reduces to showing the following two estimates:
\begin{equation}\label{eq:sieve_main_lower_bd}
    M(z, y)\gg \frac{q}{\varphi(q)}\frac{1}{\log X},
\end{equation}
\begin{equation}\label{eq:remainder_averaged}
    \sumCp_{a(q)}\int_X^{2X} |E(x, a, y, z)|^2dx\ll A \varphi(q)X = \frac{LX}{\log X}.
\end{equation}
We end this section by showing (\ref{eq:sieve_main_lower_bd}) with a computation along the lines of \S3 of~\cite{Ma}. 
Write 
\[
    V(w) = \prod_{\substack{p\le w\\ p\nmid q}}\bigg(1 - \frac{1}{p}\bigg), V(w, z) = \prod_{\substack{w < p\le z\\ p\nmid q}}\bigg(1 - \frac{1}{p}\bigg), 
    V(z) = V(w)V(w, z).
\]
It is clear that $V(z)\asymp\frac{q}{\varphi(q)}\frac{1}{\log X}$. 
Then, by the fundamental lemma of the sieve (see~\cite[Lemma 6.8]{FI}, for example), we have that 
\[
    \ssum{(e, q) = 1}\frac{\rho_e^\pm}{e} = (1 + O(\eps))V(w).
\]
We also have that with $f, F$ the linear sieve functions (see \S3 of~\cite{Ma} or Chapter 12 of~\cite{FI})
\begin{align*} 
    \sum_{(d, q) = 1}\frac{\lambda_d^+}{d}&\le \bigg(F\pfrc{\log D}{\log z} + O(\eps)\bigg)V(w, z),\\
    \sum_{(d, q) = 1}\frac{\lambda_d^-}{d}&\ge \bigg(f\pfrc{\log D}{\log z} + O(\eps)\bigg)V(w, z),\\
    \sum_{(d, q) = 1}\frac{\lambda_{d,P}^+}{d}&\le\bigg(F\pfrc{\log (D/P)}{\log z} + O(\eps)\bigg)V(w, z).
\end{align*}
Now, we have that by the definition of $\alpha_d^-, \beta_d$,
\begin{align*}
    M(z, y)&\ge V(z)\bigg(f(4) - \sum_{z < p\le y}\bigg(1 - \frac{\log p}{\log y}\bigg)F\pfrc{\log(D/p)}{\log z} + O(\eps)\bigg)\\
    &\ge V(z)\bigg(\frac{1}{2}e^\gamma\log 3 - 2e^\gamma\int_{1/4}^{2\gamma}(1 - 2\gamma\alpha)\frac{d\alpha}{4\alpha(1 - \alpha)} + O(\eps)\bigg).
\end{align*}
With our choice of $\kappa = \frac{17}{31}$, this is 
\[
    \frac{1}{2}e^\gamma\log 3 - 2e^\gamma\int_{1/4}^{2\kappa}(1 - 2\kappa\alpha)\frac{d\alpha}{4\alpha(1 - \alpha)}\ge 0.0166,
\]
so (\ref{eq:sieve_main_lower_bd}) follows.

It therefore remains to show (\ref{eq:remainder_averaged}), which we do in the following two sections.

\section{Type I sums on average}
We shall use the following generalization of Proposition 5.1 in~\cite{Ma} to bound the left-hand side of (\ref{eq:remainder_averaged}).
\begin{proposition}\label{prop:typeI_average}
    Take $(a_d)_{d\le D_0}$ a sequence supported on $d\le D_0$ coprime to $q$ for some $D_0\le X^{1 - \delta}$ for some absolute $\delta > 0$.
    Suppose also that $L\le \frac{1}{3}X$.
    Then, for any smooth function $g$ compactly supported on $[1/2, 3]$, we have that 
    \begin{multline*}
        \int_\RR \sumCp_{a(q)} g\pfrc{x}{X}\bigg(\ssum{d, m\\ x - L < md\le x\\ md\equiv a(q)}a_d - \frac{L}{q}\sum_{d}\frac{a_d}{d}\bigg)^2dx\\
        = S_1 + S_2 + S_3 + O\bigg(1 + \frac{L^2d(q)}{q}\bigg|\sum_d\frac{a_d}{d}\bigg|
        \bigg|\sum_d a_d\bigg|\bigg),
    \end{multline*}
    where 
    \begin{align*}
        S_1 &= 2\hat g(0)X\varphi(q)\sum_d\gamma_{d,L/q}\bigg(\sum_{m\equiv 0(d)}\frac{a_m}{m}\bigg),\\
        S_2 &= \sum_{0 < |k|\le L/q}\ssum{d_1, d_2\le D_0\\ (d_1, d_2) | qk}a_{d_1}a_{d_2}\int_{\max(0, qk)}^{L + \min(0, qk)}\\ 
        &\hspace{1cm}\bigg(\ssum{m_1, m_2\\ d_1m_1 = d_2m_2 + kq\\ (m_1, q) = 1} g\bigg(\frac{d_1m_1}{X} + \frac{y}{X}\bigg)
        - \frac{1}{[d_1, d_2]}\cdot\frac{\varphi(q)}{q}\hat g(0)X\bigg)dy,\\
        S_3 &= \frac{\varphi(q)}{q}\int_0^L\sum_{(n, q) = 1}g\bigg(\frac{n}{X} + \frac{y}{X}\bigg)\bigg(\sum_{d | n} a_d\bigg)^2 dy \\ 
        &\hspace{1cm} - \frac{\varphi(q)}{q}\hat g(0)LX\frac{1}{X^{10}}
        \ssum{n\le X^{10}\\ (n, q) = 1}\bigg(\sum_{d | n} a_d\bigg)^2,
    \end{align*}
    with
    \[
        \gamma_{d,L/q} = \ssum{m\ge 1\\ (m, d) = 1}\bigg(\frac{d}{\pi m}\sin\pfrc{\pi mL}{qd}\bigg)^2.
    \]
\end{proposition}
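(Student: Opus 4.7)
My plan is to expand the square and evaluate each piece by orthogonality over reduced residues, M\"obius inversion, and Poisson summation, grouping the outputs into $S_1, S_2, S_3$ after main-term cancellations.

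First, write $V = V_1 - 2MV_2 + \varphi(q) M^2 \hat g(0) X$, with $V_1 = \int \sum_a^{*} g(x/X) Z(x,a)^2\,dx$ and $V_2 = \int \sum_a^{*} g(x/X) Z(x,a)\,dx$, where $\sum_a^{*}$ runs over reduced residues $a\in(\ZZ/q\ZZ)^\times$ and $Z(x,a) = \sum_{x-L<md\le x,\,md\equiv a(q)} a_d$. Orthogonality yields $\sum_a^{*}\mathbf{1}_{n_1, n_2 \equiv a(q)} = \mathbf{1}_{(n_1, q) = 1,\,n_1 \equiv n_2(q)}$. In $V_1$, parametrize pairs by $k = (n_1 - n_2)/q$ with $|k|q < L$, and split into the diagonal $k = 0$ and off-diagonal $k \ne 0$. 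After the substitution $x = n+y$, the $k = 0$ piece is $\int_0^L \sum_{(n,q)=1} g((n+y)/X) c_n^2\,dy$ with $c_n = \sum_{d|n,\,d\le D_0} a_d$; I would write this as a long-range average plus a fluctuation, where the average $\hat g(0)LX\cdot X^{-10}\sum_{n\le X^{10}, (n,q)=1}c_n^2$ matches the subtraction in $S_3$ up to a factor of $\varphi(q)/q$, to be supplied by the $V_2$ bookkeeping. The $k \ne 0$ piece, parametrizing $(n_1, n_2) = (d_1 m_1, d_2 m_2)$ with $d_1 m_1 = d_2 m_2 + kq$ (solvable iff $(d_1,d_2)\mid kq$), produces $S_2$ after subtracting the natural mean $\frac{1}{[d_1,d_2]}\frac{\varphi(q)}{q}\hat g(0)X$ (density of coprime solutions, obtained via M\"obius on $(m_1, q)=1$ and Poisson summation in $m_1$).

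Next, evaluate $V_2$ by M\"obius on $(n,q)=1$ and Poisson in the $m$-variable of $n = dm$: the $h=0$ Fourier frequency gives $\varphi(q) MX\hat g(0)$, while higher frequencies are negligible by Schwartz decay of $\hat g$ (since $de \le D_0 q \le X^{1-\delta/2}$ forces the rescaled argument to be $\gg X^{\delta/2}$). Summing the $S_2$-subtracted means over $k\ne 0$ with weight $L - |k|q$, and using the exact identity
\[
\sum_{0<|k'|<L/(q(d_1,d_2))}(L - |k'|q(d_1,d_2)) = \frac{L^2}{q(d_1,d_2)} - L + \rho - \frac{\rho^2}{q(d_1,d_2)}
\]
(with $\rho = L \bmod q(d_1,d_2)$), the leading contribution is $\varphi(q) M^2 \hat g(0)X$, which combined with $-2MV_2 + \varphi(q) M^2 \hat g(0)X$ fully cancels at leading order, leaving a residual that must equal $S_1$ plus the remaining piece of $S_3$.

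To identify $S_1$ exactly, apply the Fej\'er-kernel identity $\sum_{|k'|\le K}(1 - |k'|/K) e(\alpha k') = \sin^2(\pi \alpha K)/(K\sin^2(\pi \alpha))$ and a Parseval step with a further M\"obius inversion enforcing $(m,d) = 1$ at the level of Fourier frequencies. This converts the residual (essentially the $-\rho^2/(q(d_1,d_2))$-type correction above, summed with $a_{d_1}a_{d_2}$-weights) into exactly $\gamma_{d, L/q} = \sum_{m\ge 1,\,(m,d)=1}(d/(\pi m))^2 \sin^2(\pi m L/(qd))$. The weighting $\sum_{m \equiv 0(d)} a_m/m$ in $S_1$ then arises by collecting pairs $(d_1, d_2)$ with $\gcd(d_1, d_2) = d$ and re-summing the complementary index. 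The error $O(1 + L^2 d(q)/q\cdot|\sum a_d/d|\cdot|\sum a_d|)$ absorbs: (a) the $O(L)$ discreteness error per pair in the $k$-sum, (b) $\gcd$ sums bounded by $d(q)$ via Lemma~\ref{lem:gcd_sum}, and (c) the $O(1)$ Poisson tails. The main obstacle will be this Fej\'er--Parseval identification: carefully tracking how the M\"obius-enforced coprimality $(m_1, q) = 1$ in $S_2$ translates into the $(m, d) = 1$ constraint appearing in $\gamma_{d, L/q}$, and correctly recovering the prefactor $2\hat g(0) X \varphi(q)$ in $S_1$.
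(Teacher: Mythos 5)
Your proposal follows essentially the same route as the paper's proof: expand the square into $\Sigma_1-2\Sigma_2+\Sigma_3$, evaluate the cross term by counting $m$ coprime to $q$, decompose $\Sigma_1$ according to $k=(d_1m_1-d_2m_2)/q$ (with solvability $(d_1,d_2)\mid kq$), subtract the expected densities to create $S_2$ and the diagonal piece of $S_3$, and identify the leftover main terms with $S_1$ via the Fej\'er/triangle-function computation, which the paper outsources to the evaluation of (38) in~\cite{Ma} but which you sketch directly; your exact identity for $\sum_{0<|k'|}(L-|k'|q(d_1,d_2))$ and the Fej\'er--Parseval step are correct, and grouping over $d\mid(d_1,d_2)$ (rather than $(d_1,d_2)=d$, as you wrote) indeed produces $\gamma_{d,L/q}$ times the square of $\sum_{m\equiv 0(d)}a_m/m$ with prefactor $2\hat g(0)X\varphi(q)$, which is the quantity actually used later in the paper.

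Two points need repair. First, your evaluation of $V_2$ discards the nonzero Poisson frequencies ``since $de\le D_0q\le X^{1-\delta/2}$'', but no upper bound on $q$ of this kind is assumed: $q$ may be nearly as large as $X$, in which case moduli $de$ with $e\mid q$ can be of size $X$ or larger and the nonzero frequencies are not negligible. The robust treatment is the paper's, via Lemma~\ref{lem:coprime_sum} (error $O(d(q))$ per unit length) and partial summation, and this is precisely the source of the allowed error $\frac{L^2d(q)}{q}\big|\sum_d a_d/d\big|\big|\sum_d a_d\big|$; your attribution of the $d(q)$ to gcd sums via Lemma~\ref{lem:gcd_sum} is misplaced (that lemma enters only in the later Kloosterman estimate), so as written your error accounting for this step is not justified for large $q$, though it is easily fixed within your framework. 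Second, your suggestion that the factor $\varphi(q)/q$ in front of $S_3$ will be ``supplied by the $V_2$ bookkeeping'' is not a correct mechanism: the $k=0$ contribution to $\Sigma_1$ is exactly $\int_0^L\sum_{(n,q)=1}g((n+y)/X)\big(\sum_{d\mid n}a_d\big)^2dy$ with no such prefactor, and the main terms of $-2MV_2+\Sigma_3$ cancel against the $k$-summed means rather than leaving anything proportional to the diagonal average; the paper's own proof likewise arrives at $S_3$ without that prefactor (its appearance in the statement is best regarded as a typo, harmless downstream since $S_3$ is only used through upper bounds and $\varphi(q)/q\le 1$), so your derivation should simply output $S_3$ without it rather than appeal to a cancellation that does not occur.
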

\begin{proof}
    Expanding, we have that 
    \begin{align}
        S = \int_\RR \sumCp_{a(q)} g\pfrc{x}{X}\bigg(\ssum{d\le D_0, m\\ x - L < md\le x\\ md\equiv a(q)}a_d - \frac{L}{q}\sum_{d\le D_0}\frac{a_d}{d}\bigg)^2dx
        = \Sigma_1 - 2\Sigma_2 + \Sigma_3
    \end{align}
where 
\begin{align*}
    \Sigma_1 &= \int_\RR\sumCp_{a(q)} g\pfrc{x}{X}\bigg(\ssum{d, m\\ x - L < md\le x\\ md\equiv a(q)} a_d\bigg)^2dx,\\
    \Sigma_2 &= \frac{L}{q}\sum_{d_1}\frac{a_{d_1}}{d_1}\int_\RR\sumCp_{a(q)} g\pfrc{x}{X}\bigg(\ssum{d_2, m\\ x - L < md_2\le x\\ md_2\equiv a(q)} a_{d_2}\bigg)dx,\\
    \Sigma_3 &= \frac{L^2}{q^2}\int_\RR\sumCp_{a(q)} g\pfrc{x}{X}dx\bigg(\sum_d\frac{a_d}{d}\bigg)^2.
\end{align*}
We have that 
\begin{multline*}
    \int_\RR\sumCp_{a(q)} g\pfrc{x}{X}\bigg(\ssum{d_2, m\\ x - L < md_2\le x\\ md_2\equiv a(q)} a_{d_2}\bigg)dx \\
    = \sum_{d_2} a_{d_2}\sum_m\sumCp_{\substack{a(q)\\ md_2\equiv a(q)}}\int_{d_2m}^{d_2m + L} g\pfrc{d_2m + y}{X} dy.
\end{multline*}
Then, noting that the sum over $m, a$ may be replaced by just a sum over $(m, q) = 1$, by Lemma~\ref{lem:coprime_sum} and partial summation, we have 
\begin{align*}
    \Sigma_2 &= \frac{L}{q}\bigg(\sum_{d_1}\frac{a_{d_1}}{d_1}\bigg)\sum_{d_2} a_{d_2}\int_{d_2m}^{d_2m + L} \sum_{(m, q) = 1} g\pfrc{d_2m + y}{X} dy \\
    &= \frac{XL^2\varphi(q)}{q^2}\hat g(0)\bigg(\sum_{d}\frac{a_d}{d}\bigg)^2 + O\bigg(\frac{L^2d(q)}{q}\bigg|\sum_d\frac{a_d}{d}\bigg|\bigg|\sum a_d\bigg|\bigg).
\end{align*}
We also have that
\[
    \Sigma_3 = \frac{XL^2\varphi(q)}{q^2}\hat g(0)\bigg(\sum_d\frac{a_d}{d}\bigg)^2.
\]
It follows that
\[
    S = \Sigma_1 - \frac{XL^2\varphi(q)}{q^2}\hat g(0)\bigg(\sum_d\frac{a_d}{d}\bigg)^2 
    + O\bigg(\frac{L^2d(q)}{q}\bigg|\sum_d\frac{a_d}{d}\bigg|\bigg|\sum a_d\bigg|\bigg).
\]
It remains to evaluate $\Sigma_1$. To that end, since only the terms with $(d, q) = 1$ contribute, we have
\begin{align*}
    \Sigma_1 &= \ssum{d_1, d_2, m_1, m_2\\ d_1m_1\equiv d_2m_2(q) \\ |d_1m_1 - d_2m_2|\le L}a_{d_1}a_{d_2}\int_\RR\sumCp_{a(q)} g\pfrc{x}{X}\charf{\substack{x - L < d_1m_1, d_2m_2\le x\\ m_1d_1\equiv m_2d_2\equiv a(q)}}dx\\
    &= \ssum{|k|\le L/q}\int_{\max(0, kq)}^{L + \min(0, kq)}\ssum{d_1, d_2\\ (d_1, d_2) | k}a_{d_1}a_{d_2}\ssum{m_1, m_2\\ d_1m_1 = d_2m_2 + kq\\ (m_1, q) = 1}g\bigg(\frac{d_1m_1}{X} + \frac{y}{X}\bigg) dy.
\end{align*}
Separating out the $k = 0$ contribution, and inserting the definition of $R_{d_1, d_2}$, we obtain that 
\begin{align*}
    \Sigma_1  &= S_2 + X\hat g(0)\frac{\varphi(q)}{q}\ssum{|k|\le L/q}(L - q|k|)\ssum{d_1, d_2\\ (d_1, d_2) | k}\frac{a_{d_1}a_{d_2}}{[d_1, d_2]}\\
    &\vspace{10cm} + \int_0^L\ssum{d_1, d_2} a_{d_1}a_{d_2}\bigg(\ssum{m_1, m_2\\ d_1m_1 = d_2m_2\\ (m_1, q) = 1} g\left(\frac{d_1m_1}{X} + \frac{y}{X}\right) - \frac{1}{[d_1, d_2]}\frac{\varphi(q)}{q}\hat g(0)X\bigg) dy\\
    &=  S_2 + \hat g(0)X\frac{\varphi(q)}{q}\ssum{|k|\le L/q} (L - q|k|)\ssum{d_1, d_2\\ (d_1, d_2) | k}\frac{a_{d_1}a_{d_2}}{[d_1, d_2]} \\
    &+ \int_0^L\sum_{(n, q) = 1} g\bigg(\frac{n}{X} + \frac{y}{X}\bigg)\bigg(\sum_{d | n} a_d\bigg)^2dy 
    - LX\hat g(0)\frac{\varphi(q)}{q}\sum_{d_1, d_2} \frac{a_{d_1}a_{d_2}}{[d_1, d_2]}\\
    &= S_2 + \hat g(0)X\frac{\varphi(q)}{q}\ssum{|k|\le L/q} (L - q|k|)\ssum{d_1, d_2\\ (d_1, d_2) | k}\frac{a_{d_1}a_{d_2}}{[d_1, d_2]} \\
    &\hspace{1cm}+ \int_0^L\sum_{(n, q) = 1} g\bigg(\frac{n}{X} + \frac{y}{X}\bigg)\bigg(\sum_{d | n} a_d\bigg)^2dy \\
    &\hspace{2cm}- LX\hat g(0)\sum_{d_1, d_2} a_{d_1}a_{d_2}\frac{1}{X^{10}}\ssum{n\le X^{10}\\ [d_1, d_2] | n\\ (n, q) = 1} 1 + O(1) \\
    &= S_2 + S_3 + \hat g(0)X\ssum{|k|\le L/q} (L - q|k|)\ssum{d_1, d_2\\ (d_1, d_2) | k}\frac{a_{d_1}a_{d_2}}{[d_1, d_2]} + O(1).
\end{align*}
It therefore remains to show that $S_1$ equals
\[
    S_1' = \hat g(0)X\frac{\varphi(q)}{q}\ssum{|k|\le L/q} (L - q|k|)\ssum{d_1, d_2\\ (d_1, d_2) | kq}\frac{a_{d_1}a_{d_2}}{[d_1, d_2]} - \frac{XL^2\varphi(q)}{q^2}
    \hat g(0)\bigg(\sum_{d | n}  a_d\bigg)^2.
\]
Note that only the terms $(d_1d_2, q) = 1$ contribute, the condition $(d_1, d_2) | kq$ may be replaced by the condition $(d_1, d_2) | k$. Applying this and rearranging, we obtain that 
\[
    S_1' = \hat g(0)Xq\sum_{d_1, d_2}\frac{a_{d_1}a_{d_2}}{d_1d_2}\bigg((d_1, d_2)\ssum{|k|\le L/q\\ (d_1, d_2) | k} \bigg(\frac{L}{q} - |k|\bigg) - \pfrc{L}{q}^2\bigg).
\]
At this point, we are done by the computation evaluating (38) in the proof of Proposition 5.1 of~\cite{Ma}
\end{proof}
Take $g$ some smooth compactly supported function such that $\charf{[1,2]}\le g\le\charf{[1/2, 3]}$. Then, by Proposition~\ref{prop:typeI_average}, we have that
\begin{align*}
    \sumCp_{a(q)}\int_X^{2X}& |E(x, a, y, z)|^2\le\sumCp_{a(q)}\int g\pfrc{x}{X}|E(x, a, y, z)|^2\\
    &\ll |S_1^\alpha| + |S_1^\beta| + |S_2| + |S_3^\alpha| + |S_4^\beta| + O\pfrc{L^2D\log Dd(q)}{q},
\end{align*}
where 
\[
    S_1^\alpha = X\varphi(q)\sum_{d}\gamma_{d, L/q}\bigg(\ssum{m\equiv 0(d)\\ (m, q) = 1} \frac{\alpha_d^-}{d}\bigg),
\]
\begin{multline*}
    S_2 = \sum_{0 < |k|\le L/q} \int_{\max(0, qk)}^{L + \min(0, qk)}\ssum{d_1, d_2\le D\\ (d_1, d_2) | k}\gamma_{d_1}\gamma_{d_2}\\
    \bigg(\ssum{m_1,m_2\\ d_1m_1 = d_2m_2 + k\\ (m_1, q) = 1}g\bigg(\frac{d_1m_1}{X} + \frac{y}{X}\bigg) - \frac{\varphi(q)}{q}\frac{1}{[d_1, d_2]}X\bigg)dy,
\end{multline*}
\begin{multline*}
    S_3^\alpha = \frac{\varphi(q)}{q}\int_0^L\sum_{(n, q) = 1}g\bigg(\frac{n}{X} + \frac{y}{X}\bigg)\bigg(\sum_{d | n} \alpha_d^-\bigg)^2\\
    -\frac{\varphi(q)}{q}\hat g(0) LX\frac{1}{X^{10}}\ssum{n\le X^{10}\\ (n, q) = 1} \bigg(\sum_{d|n}\alpha_d^-\bigg)^2,
\end{multline*}
and $S_1^\beta, S_3^\beta$ are the same as $S_1^\alpha,  S_3^\alpha$ with $\beta_d$ in place of $\alpha_d^-$. Here, $\gamma_d = \alpha_d^- - \beta_d$.
\section{Bounding (\ref{eq:remainder_averaged})}
In this section, we show (\ref{eq:remainder_averaged}) by bounding 
$S_1^\alpha, S_1^\beta, S_2, S_3^\alpha, S_3^\beta$ in the following few subsections.

\subsection{Bounding \texorpdfstring{$S_1^\alpha, S_1^\beta$}{S1}}
We shall show in this section that 
\begin{equation}\label{eq:S1_bd}
    S_1^\alpha, S_1^\beta\ll X\varphi(q)A = \frac{XL}{\log X}.
\end{equation}
Noting that $\gamma_{d, L/q}\ll dL/q$, (\ref{eq:S1_bd}) follows if we can show that 
\begin{equation}\label{eq:S1_desired_est1}
    \sum_d d\bigg(\ssum{(m, q) = 1\\ m\equiv 0(d)}\frac{\alpha_d^-}{d}\bigg)^2, \sum_d d\bigg(\ssum{(m, q) = 1\\ m\equiv 0(d)}\frac{\beta_d}{d}\bigg)^2
    \ll\frac{q}{\varphi(q)}\frac{1}{\log X}.
\end{equation}
Identically to the reduction of (49), (50) to (55) in~\cite{Ma}, we can show that (\ref{eq:S1_desired_est1}) follows from: 
\begin{equation}\label{eq:S1_desired_final}
    \sum_d d\bigg(\ssum{(m, q) = 1\\m\equiv 0(d)}\frac{\rho^\pm_m}{m}\bigg)^2\ll\frac{q}{\varphi(q)}\frac{1}{\log X}.
\end{equation}
The remainder of this section shall be dedicated to proving (\ref{eq:S1_desired_final}).
We begin with the statement of an analogue of~\cite[Lemma 6.1]{Ma}. 
\begin{lemma}\label{lem:general_est_S1}
    For any sequence of complex numbers $(\lambda_m)$ supported on $m$ coprime to $q$ and all of whose prime factors are $ < w$, we have that 
    \[
        \sum_{d}\bigg(\ssum{m\equiv 0(d)}\frac{\lambda_m}{m}\bigg)\ll \prod_{\substack{p < w\\p\nmid q}}\bigg(1 - \frac{1}{p}\bigg)
        \ssum{b\\ p | b\implies p < w\\ (b, q) = 1}\frac{b}{\varphi(b)^2}\ssum{e_1, e_2\\ (e_1, e_2) = 1\\ (e_1e_2, bq) = 1}
        \frac{|\theta_{be_1}||\theta_{be_2}|}{e_1e_2\varphi(e_1e_2)}
    \]
    where for $(b, q) = 1$, we take
    \[
        \theta_b = \sum_{d | b}\lambda_d.
    \]
\end{lemma}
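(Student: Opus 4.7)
The plan is to mimic Matom\"aki's proof of Lemma 6.1 in~\cite{Ma}, propagating the coprimality condition $(m, q) = 1$ throughout the argument.

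The starting point is M\"obius inversion applied to the definition of $\theta$: $\lambda_m = \sum_{c | m}\mu(m/c)\theta_c$. Substituting this into the inner sum $T_d := \sum_{m\equiv 0(d)}\lambda_m/m$ and swapping the order of summation, one writes $m = [c, d]\,t$ and evaluates the sum over $t$ (restricted to squarefree smooth coprime-to-$q$ integers, which is forced by $\lambda$'s support) as an Euler product to obtain
\[
T_d = V(w)\sum_c \theta_c \frac{\mu(d/(c, d))}{\varphi([c, d])},
\]
where $V(w) = \prod_{p < w,\, p \nmid q}(1 - 1/p)$.

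Squaring, multiplying by $d$, and summing yields
\[
\sum_d d\, T_d^2 = V(w)^2\sum_{c_1, c_2}\theta_{c_1}\theta_{c_2}\sum_d \frac{d\,\mu(d/(c_1, d))\mu(d/(c_2, d))}{\varphi([c_1, d])\varphi([c_2, d])}.
\]
The inner sum over $d$ factors as an Euler product after classifying primes of $d$ into four types, according to whether they divide $(c_1, c_2)$, divide $c_1$ only, divide $c_2$ only, or divide neither. Parametrizing $c_1 = be_1$, $c_2 = be_2$ with $b = (c_1, c_2)$, $(e_1, e_2) = 1$, $(e_1 e_2, b) = 1$, and taking absolute values, the prefactor $V(w)^2$ combines with the Euler product over the ``free'' primes (those in the fourth type) via the identity $\prod_{p < w,\, p \nmid q}(1 - 1/p + 1/p^2) \asymp V(w)$ to produce the desired overall prefactor $V(w)$.

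The main obstacle is verifying the per-prime comparison inequalities for the remaining Euler products: for $p \mid b$, one must check $(p + 1)(p - 1)^2 \leq p(p^2 - p + 1)$, and for $p \mid e_1 e_2$, one must check $p(p - 1) \leq p^2 - p + 1$; both hold elementarily for $p \geq 2$. By multiplicativity, these give the per-prime domination of the residual Euler products by $\frac{b}{\varphi(b)^2}$ and $\frac{1}{e_1 e_2\,\varphi(e_1 e_2)}$ respectively, which combined with the above yields the desired bound.
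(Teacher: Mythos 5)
Your proposal is correct and is essentially the paper's proof: the paper simply declares the argument identical to Matom\"aki's Lemma 6.1 with all sums restricted to integers coprime to $q$, and what you write out (M\"obius inversion to pass from $\lambda$ to $\theta$, evaluation of the inner sum as $V(w)\sum_c\theta_c\mu(d/(c,d))/\varphi([c,d])$, factoring the $d$-sum into Euler factors by the four prime types, and the per-prime comparisons after setting $c_1=be_1$, $c_2=be_2$) is exactly that argument with the coprime-to-$q$ condition carried through. Note also that you correctly prove the intended quantity $\sum_d d\,(\sum_{m\equiv 0(d)}\lambda_m/m)^2$, matching (\ref{eq:S1_desired_final}), rather than the misprinted left-hand side in the lemma statement.
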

\begin{proof}
    The proof of Lemma~\ref{lem:general_est_S1} is identical to that of~\cite[Lemma 6.1]{Ma}, 
    except throughout, sums are taken to be over numbers coprime to $q$. 
\end{proof}
Specializing to the case of $\lambda_d = \rho_d^\pm\charf{(d, q) = 1}$, we are done by using non-negativity to allow the sum to range over $b, e_1, e_2$,
possibly not coprime to $q$, taking 
\[
    \theta_b = \sum_{d|b}\rho_d^\pm.
\]
By the estimates at the end of~\cite[\S6.2]{Ma}, as our choice of $\beta$ satisfies $\pfrc{\beta}{\beta - 1}^{16}< 2$ for $\eps$ sufficiently small, the desired result follows.

\subsection{Bounding \texorpdfstring{$S_3$}{S3}}
We shall show in this section that 
\begin{equation}\label{eq:S3_ests}
    S_3^\alpha, S_3^\beta\ll\frac{\varphi(q)}{q}\frac{LX}{\log X} = \frac{\varphi(q)}{q}AX\varphi(q).
\end{equation}
This follows if we can show for $Y\in\set{3X, X^{10}}$ that 
\[
    \ssum{n\le Y\\ (n, q) = 1}\bigg(\sum_{d | n}\alpha_d^-\bigg)^2, \ssum{n\le Y\\ (n, q) = 1}\bigg(\sum_{d | n}\beta_d\bigg)^2\ll\frac{Y}{\log X}.
\]
Now, note that 
\begin{align*}
    \ssum{n\le Y\\ (n, q) = 1}\bigg(\sum_{d | n}\alpha_d^-\bigg)^2&\le\sum_{n\le Y}\bigg(\sum_{d | n}\alpha_d^-\bigg)^2\\
    &\ll\sum_{n\le Y}\bigg(\sum_{e | n}\rho_e^+\bigg)^2 + \sum_{n\le Y}\bigg(\sum_{e | n}\rho_e^-\bigg)^2\ll\frac{Y}{\log X}
\end{align*}
by~\cite[(71)]{Ma}. Also, by the definition of $\beta_d, \alpha_{d, P}^+$, we have that as in the reduction of (70) to (71) in~\cite{Ma},
\[
    \ssum{n\le Y\\ (n, q) = 1}\bigg(\sum_{d | n}\beta_d\bigg)^2
    \le\ssum{n\le Y}\bigg(\sum_{d | n}\beta_d\bigg)^2\le\sum_{n\le Y}\bigg(\sum_{e|n}\rho_e^+\bigg)^2
\]
The desired result then follows from~\cite[(71)]{Ma} again since our choice of $\beta$ satisfies $\pfrc{\beta}{\beta - 1}^{16} < 2$.

\subsection{Bounding \texorpdfstring{$S_2$}{S2}}
Our treatment shall be broadly similar to that of Lemma 4 of Mikawa~\cite{Mi}.
The well-factorability of linear sieve weights shall reduce the estimation of $S_2$ to Proposition~\ref{prop:disp_est}.
We use Poisson summation to reduce Proposition~\ref{prop:disp_est} to bounding a bilinear form with Kloosterman fractions whose coefficients are sieve weights.
An application of Cauchy-Schwarz to smooth out two of the coefficients, followed by the Weil bound for the resulting incomplete Kloosterman sum yields the result. 
\begin{proposition}\label{prop:disp_est}
    Suppose that $\alpha_1,\beta_1,\alpha_2,\beta_2 : \NN\to\CC$ are bounded. Suppose that $w$ is smooth and compactly supported in $\RR_{>0}$.
    Then, for $M_1, N_1, M_2, N_2\ge 1$, $a\ne 0, e, e'\ge 1, \delta | aq$ satisfying $(ee', q) = 1$, we have
    \begin{multline*}
       \ssum{m_1\sim M_1\\ n_1\sim N_1\\ m_2\sim M_2\\ n_2\sim N_2\\ (m_1n_1, m_2n_2) = \delta\\ (m_1n_1m_2n_2, q) = 1}\alpha_1(m_1)\beta_2(n_1)
       \alpha_2(m_2)\beta_2(n_2) [\dots]\\
        \ll\bigg(\frac{M_1\efr{1}{4}N_1\efr{1}{2}}{M_2\efr{1}{2}}
            + \frac{1}{M_1^{\frac{1}{4}}N_1^{\frac{1}{2}}}
        + \frac{X\efr{1}{2}}{M_1\efr{1}{2}M_2\efr{1}{2}N_1N_2}\bigg) (ee')^2
        M_1N_1M_2N_2(|a||q|)^{o(1)},
    \end{multline*}
    where 
    \[
        [\dots] = \bigg(\ssum{r\sim X\\ r\equiv -aq(em_1n_1)\\ r\equiv 0(e'm_2n_2)\\ (r, q) = 1}
        w\pfrc{r}{X} - \frac{1}{[em_1n_1, e'm_2n_2]}\frac{\varphi(q)}{q}\hat w(0)X\bigg).
    \]
\end{proposition}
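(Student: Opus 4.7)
The plan is to use Poisson summation in $r$ to extract the main term and produce oscillatory phases of Kloosterman-fraction type, then to apply Cauchy--Schwarz to the resulting bilinear form and bound the inner sum via the Weil estimate in the form of Lemma~\ref{lem:kloost_est}. The structure of the argument is modeled on Lemma~4 of Mikawa~\cite{Mi}.

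First I would detect the condition $(r,q)=1$ by M\"obius inversion, writing $\mathbf{1}_{(r,q)=1} = \sum_{f \mid (r,q)} \mu(f)$; this splits the $r$-sum into inner sums over $r = fr'$ for $f \mid q$. The hypotheses $(ee',q)=1$, $\delta \mid aq$, and $(m_1 n_1, m_2 n_2) = \delta$ let CRT combine the three conditions $r \equiv -aq \pmod{em_1 n_1}$, $r \equiv 0 \pmod{e' m_2 n_2}$, and $f \mid r$ into a single congruence $r \equiv r_0 \pmod{M_f}$ with $M_f = [em_1 n_1, e' m_2 n_2, f]$. Poisson summation then gives
\[
\sum_{r \,\equiv\, r_0 \,(M_f)} w\!\left(\tfrac{r}{X}\right) \;=\; \frac{X}{M_f} \sum_{h \in \mathbb{Z}} \hat w\!\left(\tfrac{hX}{M_f}\right) e\!\left(\tfrac{h r_0}{M_f}\right).
\]
The $h=0$ contribution, resummed against $\mu(f)/f$ using $\sum_{f \mid q} \mu(f)/f = \varphi(q)/q$, exactly matches the term $\tfrac{\varphi(q)}{q} \hat w(0) X / [em_1 n_1, e' m_2 n_2]$ subtracted in $[\dots]$. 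It remains to estimate the $h \ne 0$ contribution, where Fourier decay of $\hat w$ restricts us to $1 \le |h| \ll M_f (|a|q)^{o(1)}/X$.

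Computing $r_0$ by CRT gives $r_0 \equiv -aq\,\overline{e' m_2 n_2 f} \pmod{em_1 n_1}$, so the phase $h r_0 / M_f$ contains a Kloosterman fraction of denominator $em_1 n_1$ whose numerator depends only on $a,h,m_2,n_2,f$, multiplied by a residual, smooth-in-$(m_1,n_1)$ phase modulo $e' m_2 n_2 f$. I would then apply Cauchy--Schwarz in the $(h,m_2,n_2,f)$ variables to strip off $\alpha_2(m_2)\beta_2(n_2)$; squaring and opening the inner sum produces, after a change of variables, an incomplete Kloosterman sum of the form treated by Lemma~\ref{lem:kloost_est} with modulus dividing $em_1 n_1 \cdot em_1' n_1' /\gcd$. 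The Weil bound gives square-root cancellation on the off-diagonal terms $(m_1,n_1)\ne(m_1',n_1')$ and yields the first summand $M_1^{1/4}N_1^{1/2}/M_2^{1/2}$; the diagonal $(m_1,n_1) = (m_1',n_1')$ contribution and the very large $|h|$ regime, where one uses direct estimation rather than Fourier cancellation, produce the second summand $1/(M_1^{1/4} N_1^{1/2})$ and the third summand $X^{1/2}/(M_1^{1/2} M_2^{1/2} N_1 N_2)$ respectively.

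The main obstacle is the CRT and M\"obius bookkeeping. One must verify that the Kloosterman fraction extracted after Poisson has denominator genuinely of size $\asymp em_1 n_1$ --- i.e. that it is not accidentally collapsed by common factors with $e' m_2 n_2 f$ --- so that Lemma~\ref{lem:kloost_est} delivers honest square-root savings, and that the coprimality hypothesis $(n,qd)=1$ in that lemma is preserved for the variable being smoothed. The exceptional residue classes and the losses from Lemmas~\ref{lem:gcd_sum} and~\ref{lem:coprime_sum} in the degenerate cases must be tracked carefully, and it is there that the $(ee')^2$ factor in the final bound absorbs the worst-case loss. Balancing the Cauchy--Schwarz split --- choosing to smooth out $\alpha_2,\beta_2$ rather than $\alpha_1,\beta_1$, and choosing which of $m_1$ or $n_1$ to sum in the Weil step --- is what fixes the specific exponents $M_1^{1/4}$ and $N_1^{1/2}$ in the first summand.
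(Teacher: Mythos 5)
Your skeleton (M\"obius to detect $(r,q)=1$, Poisson summation to extract the main term and produce Kloosterman fractions, Cauchy--Schwarz, then Lemma~\ref{lem:kloost_est}) is the same as the paper's, but your Cauchy--Schwarz is applied to the wrong set of variables, and this is not mere bookkeeping: it destroys the estimate. You keep $(m_1,n_1)$ inside the square and strip off $\alpha_2(m_2)\beta_2(n_2)$ (together with $h$), so after opening the square the Kloosterman fraction has modulus of size roughly $m_1n_1m_1'n_1'/\gcd\asymp (M_1N_1)^2$, while the variables left over to exhibit cancellation ($m_2$ of length $M_2$, $n_2$ of length $N_2$, or $h$ ranging over about $M_1N_1M_2N_2/X$ values) all have lengths far below $M_1N_1$ in the regime where the proposition is applied ($M_1,M_2\ll X^{1/2-50\eps}$, $N_1,N_2\ll X^{\kappa-1/2}$, so $M_1N_1\approx X^{0.55}>M_2$). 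Lemma~\ref{lem:kloost_est} gives at least $q^{1/2}\approx M_1N_1$ for such a sum, which exceeds the trivial bound $M_2$; your ``square-root cancellation on the off-diagonal'' is therefore a loss, not a gain, and cannot produce the first summand $M_1^{1/4}N_1^{1/2}M_2^{-1/2}$. Indeed, the shape of that summand encodes a Kloosterman sum of length $M_2$ to a modulus of size $M_1N_1^2$ (its square root $(M_1N_1^2)^{1/2}$ is what appears), and that is exactly what the paper arranges: Cauchy--Schwarz removes $\alpha_1(m_1)$ and $\alpha_2(m_2)$, i.e.\ $m_1,m_2$ are the outer variables and $\ell,n_1,n_2$ stay inside, so that after expanding only one copy of $m_1$ enters the modulus (it becomes $m_1'n_1'\tilde n_1'$ up to gcds), and the innermost $m_2$-sum is the incomplete Kloosterman sum to which the Weil bound is applied. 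With your split you would have to beat a modulus $\asymp(M_1N_1)^2$ with variables of total length well below it, which Lemma~\ref{lem:kloost_est} cannot do.

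Two secondary inaccuracies. In the paper the three summands arise as follows: the Weil square root $(M_1N_1^2)^{1/2}$ on the off-diagonal gives the first; the completion loss $1+M_2/(M_1N_1^2)$ from Lemma~\ref{lem:kloost_est} (still off-diagonal) gives the second; and the diagonal $\ell_1\tilde n_1'\tilde n_2'=\ell_2 n_1'n_2'$, whose size is controlled by the $1/L_0$ normalization from Poisson, gives the third. There is no ``very large $|h|$ regime'' contributing: frequencies beyond $X^{\eps_1}M_1N_1M_2N_2/(\delta X)$ are negligible by Schwartz decay of $\hat w$, so your attribution of the second and third summands is off. Also, after your CRT step the residual phase modulo $e'm_2n_2f$ is trivial (the residue $r_0$ vanishes there), so nothing ``smooth in $(m_1,n_1)$'' survives; the paper sidesteps the CRT by substituting $r=m_2n_2r'$ before Poisson and reduces to $e=e'=1$ at the outset, which is harmless but cleaner. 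These are minor; the Cauchy--Schwarz arrangement is the point that must be corrected for the proof to go through.
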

First, we bound $S_2$ assuming Proposition~\ref{prop:disp_est}.
Note that by well-factorability properties of the linear sieve weights (see~\cite[\S12.7]{FI}), we may decompose $S_2$ into $O((\log X)^{O(1)})$-many sums of the form 
\begin{align*}
    \sum_{0 < |k|\le L/q}&\sum_{e, e'}\rho_e^\pm\rho_{e'}^\pm\int_{\max(0, qk)}^{L + \min(0, qk)}
    \sum_{d_1, d_2, d_3, d_4} 
    \alpha_1(d_1)\alpha_2(d_2)\alpha_3(d_3)\alpha_4(d_4)\\
    &\bigg(\ssum{m_1, m_2\\ ed_1d_2m_1 = e'd_2d_3m_2 + kq\\ (m_1, q) = 1} g\bigg(\frac{d_1m_1}{X} + \frac{y}{X}\bigg)
        - \frac{1}{[ed_1, e'd_2]}\cdot\frac{\varphi(q)}{q}\hat g(0)X\bigg)
\end{align*}
for bounded coefficients $\alpha_i$ supported on $d_i\sim D_i$ for scales $D_1, D_3\ll X^{\frac{1}{2} - 50\eps}$, $D_2, D_4\ll X^{\kappa - \frac{1}{2}}$.
Then, by Proposition~\ref{prop:disp_est} applied to the inner sum, this is at most
\begin{multline*}
    \frac{L}{q}\cdot L\cdot X^{2\kappa - 100\eps}\cdot\bigg(X^{\frac{1}{8} - \frac{25}{2}\eps + \frac{1}{2}(\kappa - \frac{1}{2})}
    + \frac{X\efr{1}{2}}{X^{\frac{1}{2} - 50\eps}X^{2\kappa -1}}\bigg) X^{o(1)}\\
    \ll\frac{L}{q}\left(X^{\frac{5}{2}(\kappa - \frac{1}{2}) - \frac{1}{2}} + X^{-50\eps}\right)LX.
\end{multline*}
For $c$ sufficiently small in terms of $\eps$ (recall that $L/q\le X^c$ in the statement of the main theorem), we obtain that
\begin{equation}\label{eq:S2_bd}
    |S_2|\ll LX^{1 - 40\eps}.
\end{equation}

It now remains to show Proposition~\ref{prop:disp_est}, which we do in the remainder of this section.
First, note by adjusting the choice of $\alpha_1,\alpha_2$, it suffices to show Proposition~\ref{prop:disp_est} in the case $e = e' = 1$, which we suppose from now on. 
Then, we have
\begin{multline*}
    \ssum{r\sim X\\ r\equiv -aq(m_1n_1)\\ r\equiv 0(m_2n_2)\\ (r, q) = 1}
    w\pfrc{r}{X} - \frac{1}{[m_1n_1, m_2n_2]}\frac{\varphi(q)}{q}\hat w(0)X\\
    = \sum_{d | q}\mu(d)\bigg(\ssum{r\sim X/d\\ r\equiv -aq\conj d(m_1n_1)\\ r\equiv 0(m_2n_2)}w\pfrc{r}{X/d} - \frac{1}{[m_1n_1, m_2n_2]}\hat w(0)\frac Xd\bigg). 
\end{multline*}
By the divisor bound and triangle inequality, it suffices to show that for any ${X_1\le X}$ and $d| q$, we have
\begin{equation}\label{eq:new_disp_est1}
    \mc S\ll\bigg(\frac{M_2\efr{1}{4}N_2\efr{1}{2}}{M_1\efr{1}{2}} + \frac{X\efr{1}{2}}{M_1\efr{1}{2}M_2\efr{1}{2}N_1N_2}\bigg) M_1N_1M_2N_2(|a||q|)^{o(1)}.
\end{equation}
Here $\mc S$ is equal to
\begin{align*}
    \ssum{m_1\sim M_1\\ n_1\sim N_1\\ m_2\sim M_2\\ n_2\sim N_2\\ (m_1n_1, m_2n_2) = \delta\\ (m_1n_1m_2n_2, q) = 1}&\alpha_1(m_1)\beta_2(n_1)
    \alpha_2(m_2)\beta_2(n_2) \\
    &\cdot\bigg(\ssum{r\sim X_1\\ r\equiv -aq\conj d(m_1n_1)\\ r\equiv 0(m_2n_2)}
    w\pfrc{r}{X_1} - \frac{1}{[m_1n_1, m_2n_2]}\hat w(0)X_1\bigg).
\end{align*}
Write $r = m_2n_2r'$, and let $\mu_i =(m_i, \delta), \nu_i = \delta/\mu_i$, $m_i' = \frac{m_i}{\mu_i}, n_i' = \frac{n_i}{\nu_i}, q_1 = aq/\delta$. Then, we have that 
\[
    r\equiv -aq\conj d(m_1n_1)\iff m_2'n_2'r'\equiv -q_1(m_1'n_1')\iff r'\equiv -\conj{m_2'n_2'}q_1 (\tilde m_1'n_1').
\]
Then, by Poisson summation, we have
\begin{align*}
    \mc S = \ssum{\mu_i\nu_i = \delta\\ (i\le 2)}\mc S_1(\mu_1,\nu_1,\mu_2,\nu_2),
\end{align*}
where $ \mc S_1(\mu_1, \nu_1,\mu_2,\nu_2)$ is
\begin{align*}
    &\ssum{m_i'\sim M_i' (i\le 2)\\ (m_i', \delta) = 1}\ssum{n_i'\sim N_i'\\ (n_i', \delta) = 1\\ (m_1'n_1', m_2'n_2') = 1}
    \alpha_1(m_1'\mu_1)\beta_1(n_1'\nu_1)\alpha_2(m_2'\mu_2)\beta_2(n_2'\nu_2)\\
    &\hspace{0.5cm}\cdot\frac{m_1'n_1'm_2'n_2'}{\delta X_1}\sum_{\ell\ne 0}\hat w\pfrc{\ell}{\delta X_1/(m_1'n_1'm_2'n_2')} 
    e\pfrc{q_1\ell\conj{dm_2'n_2'}}{\tilde m_1'n_1'}.
\end{align*}
Now, fix some choice of $\mu_1, \nu_1,\mu_2,\nu_2$. It suffices to show that 
\[
    \mc S_1\ll \bigg(\frac{M_2\efr{1}{4}N_2\efr{1}{2}}{M_1\efr{1}{2}} + \frac{X\efr{1}{2}}{M_1\efr{1}{2}M_2\efr{1}{2}N_1N_2}\bigg) M_1N_1M_2N_2(|a||q|)^{o(1)}.
\]
Take $\eps_1 > 0$ small, and let $L_0 = X^{\eps_1}\frac{\tilde e\tilde M_1'N_1'M_2'N_2'}{\delta X_1}$.
Because $w$ is Schwartz, the contribution of $|\ell| > L_0$ is $O(X^{-2023})$. 
Therefore, it suffices to bound the contribution $\mc S_1'$ of $|\ell|\le L_0$. 
By partial summation, for some choice of $L\le L_0$, $\mc S_1'$ is 
\begin{align*}
    \ll X^{\eps_1}\ssum{m_i'\sim M_i' (i\le 2)\\ (m_i', \delta) = 1}\ssum{n_i'\sim N_i'\\ (n_i', \delta) = 1\\ (m_1'n_1', m_2'n_2') = 1}
    &\alpha_1(m_1'\mu_1)\beta_1(n_1'\nu_1)\alpha_2(m_2'\mu_2)\beta_2(n_2'\nu_2)\\
    &\cdot\frac{1}{L_0}\sum_{0 < |\ell|\le L} e\pfrc{q_1\ell\conj{\tilde dm_2'n_2'}}{\tilde m_1'n_1'}.
\end{align*}
By Cauchy-Schwarz, we have that 
\[
    |\mc S_1'|^2\ll X^{\eps_1}M_1'M_2'\Omega,
\]
where
\[
    \Omega = \ssum{m_i'\sim M_i' (i\le 2)\\ (m_i', \delta) = 1}
    \bigg|\frac{1}{L_0}\sum_{0 < |\ell|\le L}\ssum{n_i'\sim N_i'(i\le 2)\\ (m_1'n_1', m_2'n_2') = 1}\beta_1(n_1'\nu_1)\beta_2(n_2'\nu_2)
    e\pfrc{q_1\ell\conj{\tilde dm_2'n_2'}}{\tilde m_1'n_1'}\bigg|^2.
\]
Expanding and applying the triangle inequality yields that 
\begin{align*}\label{eq:cs_expand}
    \Omega\le\sum_{m_1'\sim M_1'}&\frac{1}{L_0^2}\sum_{0 < |\ell_1|, |\ell_2|\le L}\ssum{n_i', \tilde n_i'\sim N_i'(i\le 2)\\ 
    (m_1'n_1', n_2') = 1\\ (m_1'\tilde n_1', \tilde n_2') = 1}\\ 
    &\bigg|\ssum{m_2'\sim M_2'\\ (m_2', m_1'n_1'\tilde n_1') = 1} 
    e\bigg(\frac{q_1\ell_1\conj{dm_2'n_2'}}{\tilde m_1'n_1'} - \frac{q_1\ell_2\conj{dm_2'\tilde n_2'}}{\tilde m_1'\tilde n_1'}\bigg)\bigg|.\numberthis
\end{align*}
It can be checked that 
\[
    \frac{\ell_1\conj{dm_2'n_2'}}{m_1'n_1'} - \frac{\ell_2\conj{dm_2'\tilde n_2'}}{m_1'\tilde n_1'}
    = \Delta\frac{\conj{dm_2'n_2'\tilde n_2'}}{m_1'\frac{n_1'}{(n_1', \tilde n_2')}\frac{\tilde n_1'}{(\tilde n_1', n_2')}}
\]
where 
\[
    \Delta = \ell_1\frac{\tilde n_2'}{(n_1', \tilde n_2')}\cdot\frac{\tilde n_1'}{(\tilde n_1', n_2')} 
    - \ell_2\frac{n_1'}{(n_1', \tilde n_2)}\cdot\frac{n_2'}{(\tilde n_1', n_2')}.
\]
Let $\mc O$ be the contribution of $\Delta\ne 0$ and $\mc D$ be that of $\Delta = 0$. We have that by the divisor bound
\begin{equation}\label{eq:diagonal_bd}
    \mc D\ll \frac{1}{L_0}M_1'M_2'N_1'N_2'X^{o(1)}
\end{equation}
Then, by Lemma~\ref{lem:kloost_est} together with (\ref{eq:cs_expand}), we have that 
\begin{align*}\label{eq:post_weil_gcds}
    \mc O\ll X^{o(1)}&(M_1'N_1'^2)^{\frac{1}{2}}\bigg(1 + \frac{M_2'}{M_1'N_1'^2}\bigg)\\
    &\cdot\ssum{m_1'\sim M_1'\\ (m_1', \delta) = 1}
    \frac{1}{L_0^2}\sum_{0 < |\ell_1|, |\ell_2|\le L}\ssum{n_i', \tilde n_i'\sim N_i'(i\le 2)\\ 
    (m_1'n_1', n_2') = (m_1'\tilde n_1', \tilde n_2') = 1\\ \Delta\ne 0} 
    (q_1\Delta, m_1'n_1'\tilde n_1')^{\frac{1}{2}}.\numberthis
\end{align*}
If we write 
\[
    u = \ell_1\tilde n_1'\tilde n_2', 
    h = \Delta(n_1', \tilde n_2')(\tilde n_1', n_2') 
    = \ell_1\tilde n_1'\tilde n_2' - \ell_2n_1'n_2', 
\]
then by the divisor bound, the right-hand side of (\ref{eq:post_weil_gcds}) is 
\begin{align*}
    &\ll X^{o(1)}(M_1'N_1'^2)^{\frac{1}{2}}\bigg(1 + \frac{M_2'}{M_1'N_1'^2}\bigg)\\
    &\hspace{1cm}\frac{1}{L_0^2}\sum_{u\le 10L_0N_1'N_2'}\ssum{0 < |h|\le 20L_0N_1'N_2'\\ u + h\ne 0}
    \sum_{m_1'\sim M_1'} (q_1h, u(u + h))\efr{1}{2} (q_1h, m_1')\efr{1}{2}\\
    &\ll X^{o(1)}(M_1'N_1'^2)^{\frac{1}{2}}\bigg(1 + \frac{M_2'}{M_1'N_1'^2}\bigg)\\
    &\hspace{1cm}\frac{1}{L_0^2}\ssum{0 < |h|\le 20L_0N_1'N_2'}\sum_{m_1'\sim M_1'} (q_1h, m_1')\efr{1}{2}
    \sum_{u\le 30L_0N_1'N_2'} (q_1h, u).
\end{align*}
By several applications of Lemma~\ref{lem:gcd_sum}, we obtain that
\begin{equation}\label{eq:off_diag_bd}
    \ll (M_1'N_1'^2)^{\frac{1}{2}}\bigg(1 + \frac{M_2'}{M_1'N_1'^2}\bigg)
    M_1'N_1'^2N_2'^2 (q_1X)^{o(1)}.
\end{equation}
The desired result follows upon combining (\ref{eq:diagonal_bd}), (\ref{eq:off_diag_bd}).

\subsection{Collecting bounds}
Combining (\ref{eq:S1_bd}), (\ref{eq:S2_bd}), and (\ref{eq:S3_ests}), we obtain 
(\ref{eq:remainder_averaged}).

\section{Acknowlegements}
The author would like to thank J. Maynard, K. Matom\"aki, and J. Ter\"av\"ainen for discussions of this problem. The author would also like to thank J. Ter\"av\"ainen and P. Sarnak for comments on earlier drafts
of this paper.

\end{document}